\documentclass[12pt]{amsart}
\usepackage{amscd,amssymb,verbatim,bbm,comment,color}

\theoremstyle{plain}
\newtheorem{thm}{Theorem}[section]
\newtheorem{cor}[thm]{Corollary}

\newtheorem{ex}[thm]{Example}

\newtheorem*{example*}{Example}

\numberwithin{equation}{section}

\textwidth = 440pt
\textheight = 660pt
\hoffset = -47pt
\voffset = -35pt

\newcommand{\abs}[1]{\lvert#1\rvert}

\title{A refined determinantal inequality for correlation matrices}

\author[N.~Gao]{Niushan Gao}
\address{Department of Mathematics, Ryerson University,  Toronto, Canada}
\email{niushan@ryerson.ca}

\author[A.~Kirillova]{Alexandra Kirillova}
\address{Department of Mathematics, University of Toronto, Toronto, Canada}
\email{alexandra.kirillova@mail.utoronto.ca}

\author[Z.~Tong]{Zihao Tong}
\address{Department of Mathematics, Ryerson University, Toronto, Canada}
\email{ztong@ryerson.ca}

\thanks{The first and second authors acknowledge support of an NSERC Discovery Grant.}

\keywords{Determinant, Variance Majorization, Eigenvalue, Correlation Matrices}

\subjclass[2010]{15A45, 15A42}

\date{\today}

\begin{document}

\maketitle

\begin{abstract}Olkin \cite{Olk14} obtained a neat upper bound for the determinant  of  a correlation matrix. In this note, we present an extension and improvement of his result.
\end{abstract}

\section{Introduction}
The theory of majorization has produced fruitful applications; see \cite{MOA11, Zhang11}. For two real vectors $x =(x_1, \ldots, x_n)$ and $y = (y_1, \ldots, y_n)$ ordered decreasingly, i.e., $x_1\ge \cdots \ge x_n$ and $y_1\ge \cdots \ge y_n$, we write $x\succ y$  and
 say that $x$ \emph{majorizes} $y$ if
\begin{eqnarray*}
 \sum_{i=1}^nx_i&=&\sum_{i=1}^ny_i,\\
  \sum_{i=1}^kx_i&\ge &\sum_{i=1}^ky_i, \ k = 1, \ldots , n-1.
  \end{eqnarray*}

Let $R = (r_{ij})$ be a given $n\times n$ correlation matrix.
Put $\widetilde{R}=(\widetilde{r}_{ij})$, where $\widetilde{r}_{ii}= 1, \widetilde{r}_{ij}=r_1$ for all $i\ne j$, and $r_1=\frac{\sum_{i\ne j}r_{ij}}{n(n-1)}$.
In his elegant note \cite{Olk14},  Olkin proved that
\begin{eqnarray*}\lambda(R)\succ\lambda(\widetilde{R}). \end{eqnarray*} As a consequence,
\begin{eqnarray}\label{olk}
 \det R\le \det \widetilde{R}=(1-r_1)^{n-1}\big(1+(n-1)r_1\big).
\end{eqnarray}
Here  $\lambda(A)=(\lambda_1(A), \ldots, \lambda_n(A))$ denotes the eigenvalues of an $n\times n$ matrix  $A$; they are ordered decreasingly whenever we talk about majorization.

In this article, we intend to extend and improve Olkin's result \eqref{olk}.
For this purpose, we need to employ the tool of variance majorization introduced in \cite{NW06}.
For two real vectors $x =(x_1, \ldots, x_n)$ and $y = (y_1, \ldots, y_n)$ ordered increasingly, i.e., $x_1\le \cdots \le x_n$ and $y_1\le \cdots \le y_n$, we write $x\stackrel{vm}{\succ} y$  and
 say that $x$ \emph{variance majorizes} $y$ if
\begin{eqnarray*}
\sum_{i=1}^nx_i&=&\sum_{i=1}^ny_i,\\
 \mathrm{Var}\big(x\big)&= &\mathrm{Var}\big(y\big),\\
 \mathrm{Var}\big(x[k]\big)&\ge &\mathrm{Var}\big(y[k]\big), \ k = 2, \ldots , n-1,
\end{eqnarray*}
where $x[k]=(x_1,\ldots,x_k)$ and $\mathrm{Var}$ stands for variance. Note that the eigenvalues of a matrix are always ordered increasingly when we talk about variance majorization.

For a given $n\times n$ correlation matrix $R=(r_{ij})$,  we construct two new matrices as follows. Put
$r_2=\sqrt{\frac{\sum_{i\ne j}|r_{ij}|^2}{n(n-1)}}$. Clearly, $\abs{r_1}\leq r_2\leq 1$. Let $\widehat{R}$ be the $n\times n$ matrix whose diagonal entries are all $1$ and off-diagonal entries are all $r_2$; let $\overline{R}$ be the $n\times n$ matrix whose diagonal entries are all $1$ and off-diagonal entries are all $-r_2$.

Our main result is the following observation.

\begin{thm}\label{thm1}
$\lambda(\overline{R})
\stackrel{vm}{\succ} \lambda(R) \stackrel{vm}{\succ}\lambda(\widehat{R})$.
\end{thm}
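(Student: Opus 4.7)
The plan is to verify, for each of the two asserted variance majorizations, the three defining requirements: equality of sums, equality of variances, and the sequence of partial-variance inequalities for $k = 2, \ldots, n-1$. The first two reduce to computing $\mathrm{tr}(M)$ and $\mathrm{tr}(M^2)$ for $M \in \{R, \widehat R, \overline R\}$; in each case $\mathrm{tr}(M) = n$ and $\mathrm{tr}(M^2) = n + n(n-1)r_2^2$, since the off-diagonal contribution to $\mathrm{tr}(M^2)$ is $\sum_{i\ne j}r_{ij}^2 = n(n-1)r_2^2$ by definition of $r_2$. Consequently all three matrices have $\sum_i \lambda_i = n$ and $\mathrm{Var}(\lambda) = (n-1)r_2^2$.

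Next I would compute the spectra of the two extremal matrices explicitly. The eigenvalues of $\widehat R$ are $1-r_2$ (multiplicity $n-1$) and $1+(n-1)r_2$; sorted increasingly, its first $n-1$ entries coincide, so $\mathrm{Var}(\lambda(\widehat R)[k]) = 0$ for every $2 \le k \le n-1$, making $\lambda(R) \stackrel{vm}{\succ} \lambda(\widehat R)$ immediate. The matrix $\overline R$ has eigenvalues $1-(n-1)r_2$ and $1+r_2$ (multiplicity $n-1$); sorted increasingly, the prefix of length $k$ has mean $1 - (n-k)r_2/k$, and a short calculation gives $\mathrm{Var}(\lambda(\overline R)[k]) = (k-1)n^2 r_2^2/k^2$ for $2 \le k \le n-1$.

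The substantive step is then to prove $\mathrm{Var}(\lambda(R)[k]) \le (k-1)n^2 r_2^2/k^2$ for $k = 2, \ldots, n-1$. Introducing $P_k(\lambda) := k^2\,\mathrm{Var}(\lambda[k]) = \sum_{i<j\le k}(\lambda_i - \lambda_j)^2$ and using $P_n(\lambda(R)) = n^2(n-1)r_2^2$, the bound is equivalent to $P_k(\lambda(R))/(k-1) \le P_n(\lambda(R))/(n-1)$. I would therefore establish the general fact that $k \mapsto P_k/(k-1)$ is nondecreasing on $\{2,\ldots,n\}$ for any nonnegative increasing sequence $\lambda_1 \le \cdots \le \lambda_n$. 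Since $P_{k+1} - P_k = \sum_{i \le k}(\lambda_{k+1} - \lambda_i)^2$, the step $k \to k+1$ reduces to $(k-1)\sum_{i \le k}(\lambda_{k+1}-\lambda_i)^2 \ge P_k$; setting $y_i := \lambda_{k+1}-\lambda_i \ge 0$ and using the identity $P_k = k\sum y_i^2 - (\sum y_i)^2$, this collapses to $(\sum_{i \le k}y_i)^2 \ge \sum_{i \le k}y_i^2$, which holds term-by-term because the $y_i$ are nonnegative. Nonnegativity of the eigenvalues of $R$, which enters precisely here, is what allows the comparison.

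The main obstacle I anticipate is locating this general inequality. A naive Cauchy--Schwarz estimate of the tail, $\sum_{i>k}\lambda_i^2 \ge (n-s)^2/(n-k)$ with $s = \sum_{i \le k}\lambda_i$, yields only a bound of order $n(n-1)r_2^2/k$, weaker than the target by a factor of order $n/k$, and it ignores the monotonicity of $\lambda$. The correct normalisation to monitor is $P_k/(k-1)$ rather than $P_k$, and once that is identified the proof reduces to the three-line algebraic manipulation above.
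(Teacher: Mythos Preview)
Your argument is correct and self-contained, but it differs from the paper's proof in that the paper does not carry out the partial-variance comparison by hand: after checking that $\lambda(R)$, $\lambda(\widehat R)$, $\lambda(\overline R)$ share the same mean and variance, it simply invokes \cite[Lemma~2.1]{NW06}, which identifies the vectors $(a,\ldots,a,b)$ and $(c,d,\ldots,d)$ as the minimal and maximal elements, under $\stackrel{vm}{\succ}$, among all increasing vectors with prescribed mean and variance. Your monotonicity of $P_k/(k-1)$ is precisely a direct proof of (the relevant half of) that lemma, so what you buy is independence from the reference at the cost of a few extra lines.

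One small correction: nonnegativity of the eigenvalues of $R$ is \emph{not} used anywhere. The inequality $(\sum_{i\le k}y_i)^2\ge\sum_{i\le k}y_i^2$ needs only $y_i\ge 0$, and $y_i=\lambda_{k+1}-\lambda_i\ge 0$ follows from the increasing order alone. Your claim that positive semidefiniteness ``enters precisely here'' should be dropped; the monotonicity of $k\mapsto P_k/(k-1)$ holds for every increasing real sequence.
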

\begin{proof}
Direct computations show that the eigenvalues of $\widehat{R}$ are $1-r_2$ with multiplicity $n-1$ and $1+(n-1)r_2$ with multiplicity $1$.
The eigenvalues of $\overline{R}$ are $1+r_2$ with multiplicity $n-1$ and $1-(n-1)r_2$ with multiplicity $1$. Write them as $\lambda(\widehat{R})$ and $\lambda(\overline{R})$ (in increasing order), respectively.
It follows  that
\begin{align*}
\sum_1^n\lambda_i(\widehat{R})&=\sum_1^n\lambda_i(\overline{R})=\sum_1^n\lambda_i(R)=n,\\
\sum_{i=1}^n\lambda_i(\widehat{R})^2&=\sum_{i=1}^n\lambda_i(\overline{R})^2=\sum_{i=1}^n\lambda_i(R)^2=\sum_{i,j=1}^nr_{ij}^2=n+n(n-1)r_2^2.
\end{align*}
Thus $\lambda(\widehat{R})$, $\lambda(\overline{R})$, and $\lambda({R})$ all have the same mean and variance. Therefore, the desired result follows immediately from \cite[Lemma 2.1]{NW06}.
\end{proof}

\begin{cor}\label{cor2} $\det\overline{R} \le \det R\le \det \widehat{R}$.
   \end{cor}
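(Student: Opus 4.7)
The plan is to reduce the determinant inequality to a comparison of sums of logarithms of eigenvalues, and then invoke Theorem~\ref{thm1} together with a standard consequence of variance majorization.

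For any real symmetric $n\times n$ matrix $A$, $\det A = \prod_{i=1}^n \lambda_i(A)$. The correlation matrix $R$ is positive semidefinite, so $\lambda_i(R)\ge 0$; since $r_2\le 1$, all eigenvalues of $\widehat R$ are also nonnegative. The smallest eigenvalue of $\overline R$, namely $1-(n-1)r_2$, may be negative when $(n-1)r_2>1$, in which case $\det\overline R<0\le\det R$ trivially. So we may focus on the case where all eigenvalues involved are nonnegative; taking logarithms, it then suffices to show
\[
\sum_{i=1}^n \log\lambda_i(\overline R) \le \sum_{i=1}^n \log\lambda_i(R) \le \sum_{i=1}^n \log\lambda_i(\widehat R).
\]

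The key input is the following consequence of variance majorization (essentially \cite[Lemma~2.1]{NW06}): if $x\stackrel{vm}{\succ} y$, then $\sum_i \phi(x_i)\le\sum_i\phi(y_i)$ for every function $\phi$ with $\phi'''\ge 0$. Intuitively, variance majorization pins down the first two moments, so the effective content of the comparison concerns the third moment and higher, which is precisely what the nonnegative third derivative singles out. The function $\phi(t)=\log t$ satisfies $\phi'''(t)=2/t^3>0$ on $(0,\infty)$, so the criterion applies, and feeding in the two variance majorizations from Theorem~\ref{thm1} yields the desired logarithmic inequalities, hence the determinant chain $\det\overline R\le\det R\le\det\widehat R$.

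The main step that needs confirmation is the third-derivative form of the variance-majorization inequality attributed to \cite{NW06}; once that fact is accepted as a black box, the rest is routine. The boundary case of a vanishing smallest eigenvalue of $\overline R$ is absorbed by the convention $\log 0=-\infty$, which only strengthens the chain.
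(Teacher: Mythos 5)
Your proof is correct in substance and follows essentially the same route as the paper: Theorem \ref{thm1}, the Neubauer--Watkins transfer from variance majorization to determinants, and the same separate treatment of the case $(n-1)r_2>1$, where $\det\overline{R}<0\le\det R$. Two caveats are worth recording. First, the fact you lean on --- that $x\stackrel{vm}{\succ}y$ implies $\sum_i\phi(x_i)\le\sum_i\phi(y_i)$ whenever $\phi'''\ge 0$ --- is not Lemma 2.1 of \cite{NW06}; that lemma is the extremal statement about vectors with prescribed mean and variance, which is what the paper uses to establish Theorem \ref{thm1} itself. The third-derivative criterion is the main characterization theorem of \cite{NW06}, and its determinantal consequence is exactly Theorem 4.1 there, which the paper cites directly; so your detour through $\phi(t)=\log t$ re-derives a quotable result rather than replacing it. Second, the logarithmic formulation is slightly delicate at zero eigenvalues: your remark that the convention $\log 0=-\infty$ only strengthens the chain is true for a vanishing eigenvalue of $\overline{R}$, but if $R$ itself is singular the middle sum equals $-\infty$ and the inequality $\sum_i\log\lambda_i(\overline{R})\le\sum_i\log\lambda_i(R)$ no longer follows from the functional inequality, which in any case is stated for positive arguments. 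This case is easily dispatched: if $\det R=0$, the remaining $n-1$ eigenvalues of $R$ sum to $n$, so $\sum_i\lambda_i(R)^2\ge n^2/(n-1)$, which gives $r_2\ge\frac{1}{n-1}$ and hence $\det\overline{R}=(1+r_2)^{n-1}\bigl(1-(n-1)r_2\bigr)\le 0=\det R$; alternatively, quoting the product (determinant) form of the Neubauer--Watkins inequality for nonnegative vectors, as the paper does, avoids logarithms and this boundary issue altogether.
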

\begin{proof}
The second inequality follows immediately by Theorem \ref{thm1} and \cite[Theorem 4.1]{NW06}.  If $1-(n-1)r_2\ge 0$, the first inequality follows for the same reasons. If $1-(n-1)r_2< 0$, then $\det \overline{R}<0\leq \det R $. \end{proof}

Note that $\det\widehat{R}=(1-r_2)^{n-1}\big(1+(n-1)r_2\big)$ and $\det \widetilde{R}=(1-r_1)^{n-1}\big(1+(n-1)r_1\big)$. The function $f(t)=(1-t)^{n-1}\big(1+(n-1)t\big)$ is increasing on $[-\frac{1}{n-1},0]$ and decreasing on $[0,1]$. Thus if $r_1\geq 0$, then $\det\widehat{R}\le \det\widetilde{R} $, and Corollary \ref{cor2} improves Olkin's \eqref{olk} in this case. However, for $r_1<0$,  $\det\widehat{R}> \det\widetilde{R} $ and  $\det\widehat{R}<\det\widetilde{R} $ both could possibly happen, depending on how close  $r_1$ is to $-\frac{1}{n-1}$.

\medskip
We give a few numerical examples to demonstrate the results.
\begin{ex}
\begin{itemize}
    \item[(1)]
$$R=\left(
  \begin{array}{ccc}
1&0&-0.5\\0&1&0.5\\-0.5&0.5&1\\
  \end{array}
\right); \quad \det R=0.5;$$
$$r_1=0,\;\det\widetilde{R}=1;\quad r_2= 0.4082,\;\det\widehat{R}=0.6361. $$
Clearly, $\det\widehat{R}$ is much smaller than $\det\widetilde{R}$. Slightly perturb the off-diagonal entries we can modify $R$ so that $r_1<0$ and $\det\widehat{R}< \det\widetilde{R} $.
\item[(2)] $$R=\left(
  \begin{array}{ccc}
    1&-0.3&-0.3\\
-0.3&1&-0.5\\
-0.3&-0.5&1\\
  \end{array}
\right); \quad \det R=0.48;$$
$$r_1=-0.3667,\;\det\widetilde{R}=0.4981;\quad r_2= 0.3786,\;\det\widehat{R}=0.6785. $$
Here, $\det\widehat{R}>\det\widetilde{R}$.
\end{itemize}
\end{ex}

\medskip

One may naturally define $r_p=\left(\frac{\sum_{i\ne j}\abs{r_{ij}}^p}{n(n-1)}\right)^{1/p}$, let $R_p$ be the $n\times n$ matrix whose diagonal entries are all $1$ and off-diagonal entries are all $r_p$, and wonder whether $$\det R\le \det R_p= (1-r_p)^{n-1}\big(1+(n-1)r_p\big)\eqno(*)$$ holds true for all $1< p<\infty$. For $1<p\leq 2$, since $0\leq  r_p\leq r_2$, $\det \widehat{R}\leq \det R_p$, so that by Theorem \ref{thm1}, $(*)$ holds true for such $p$. Thus, it is enough to ask whether $(*)$ holds true for all  $2< p<\infty$. Note that $\lim_{p\rightarrow\infty}r_p=r_\infty:=\max_{i\ne j}\abs{r_{ij}}$. Thus if this were to happen, we must have $$\det (R)\le (1-r_\infty)^{n-1}\big(1+(n-1)r_\infty\big).$$
The following example, however, rejects this.
\begin{ex}$$R=\left(
  \begin{array}{ccc}
1&0&0.8\\0&1&-0.5\\0.8&-0.5&1\\
  \end{array}
\right); \quad r_\infty=0.8;$$
$$\det R=0.11,\quad(1-r_\infty)^{2}\big(1+2r_\infty\big)=0.104. $$
\end{ex}

\end{document}